\numberwithin{equation}{section}
\newtheorem{thm}{Theorem}[section]
\newtheorem{lemma}[thm]{Lemma}
\newtheorem{fact}[thm]{Fact}
\newtheorem{cor}[thm]{Corollary}
\newtheorem{definition}[thm]{Definition}
\newtheorem{prop}[thm]{Proposition}
\newtheorem{example}[thm]{Example}
\renewcommand{\geq}{\geqslant}
\renewcommand{\leq}{\leqslant}
\title{Uniform scaling limits for ergodic measures}
\author{Jonathan M. Fraser\\
\emph{School of Mathematics, The University of Manchester,}\\ \emph{Manchester, M13 9PL, UK}\\
\emph{Email: jonathan.fraser@manchester.ac.uk} \\ \\
 Mark Pollicott\\
\emph{Mathematics Institute, Zeeman Building,}\\ \emph{University of Warwick, Coventry, CV4 7AL, UK}\\
\emph{Email: mpollic@maths.warwick.ac.uk}}
\begin{document}
\maketitle

\begin{abstract}
We prove that ergodic measures on one-sided shift spaces are uniformly scaling in the sense of Gavish.  That is, given a shift ergodic measure we prove that at almost every point the scenery distributions weakly converge to a common distribution on the space of measures.  Moreover, we give an explicit description of the limiting distribution in terms of a `reverse Jacobian' function associated with the corresponding measure on the space of left infinite sequences.
\\

\emph{Mathematics Subject Classification} 2010: 37B10, 28A33, 37A99, 60F05.

\emph{Key words and phrases}: Ergodic measure, uniformly scaling measure, Gibbs measure.

\end{abstract}

\section{Introduction}

Given a set or measure one is often interested in studying the fine structure, i.e., properties derived from infinitesimal behaviour.  As such it is important to understand `tangents' and what is currently emerging in the literature on geometric measure theory, ergodic theory and fractal geometry is that understanding the dynamics of the process of `zooming-in' to the tangents is even more valuable.  Some of these ideas go back a long way, in particular to Furstenberg's work in the 60s and 70s, see \cite{Furstenberg60s}, but the techniques and philosophies have recently been applied to great effect, for example see \cite{Furstenberg, Gavish, Hochman, HochmanShmerkin}.  First one defines a process of zooming-in at a point in the support of a given measure.  This may not converge but one is interested in weak accumulation points of this process in the appropriate space of measures.  One drawback of this approach is that one may obtain too many `tangent measures' and not be able to relate them sensibly back to the original measure.  As such one looks to define a measure on the space of measures (commonly referred to as a \emph{distribution}) which best describes which measures are most prevalent during the process of zooming-in. We will make this precise in the context of measures on shift spaces in Section \ref{blowingupmeasures}. Gavish \cite{Gavish} introduced the concept of a measure being `uniformly scaling' if at almost every point in the support of the measure, the zooming-in process generates the same distribution.  As such, uniformly scaling measures are very homogeneous and it turns out that one can make very strong statements about their geometry.  In particular, see \cite{Hochman, HochmanShmerkin, HochmanShmerkin2, kaenmaki} for recent and influential developments in this direction.
\\ \\
In this paper we study the process described above abstractly in the context of ergodic measures on shift spaces.  Our main result is that shift ergodic measures are uniformly scaling in the sense of Gavish \cite{Gavish} and we are able to explicitly describe the generated distribution in terms of the `reverse Jacobian' of the naturally associated measure on the space of left infinite sequences.  Our main result will be given in Section \ref{resultssection} and proved in Section \ref{mainproof}. In Section \ref{resultssectiongibbs} we discuss the simpler setting of ergodic Gibbs measures, where the reverse Jacobian is the classical $g$-function.  We consider some simple examples and finish by proving a Central Limit Theorem for the scaling scenery of ergodic Markov measures.

\subsection{Scaling scenery for measures on shift spaces}   \label{blowingupmeasures}

Let $\mathcal{I} = \{1, \dots, k\}$ be a finite alphabet, $\Sigma^+  = \prod_0^{\infty} \mathcal{I}$ be the space of one-sided sequences over $\mathcal{I}$ and $\sigma$ denote the usual (left) shift map.  Abusing notation slightly we write $x= (x_0 , \dots, x_{n-1}) \in \prod_0^{n-1} \mathcal{I}$ and $x= (x_0, x_1, \dots) \in \Sigma^+$. We equip $\Sigma^+$ with the standard metric defined by
\[
d(x,y) = 2^{-n(x,y)}
\]
where for $x \neq y$, $n(x,y) = \max\{ n \in \mathbb{N} : x_k= y_k \text{ for all $k=0, \dots, n$}\}$. Write $\mathcal P (\Sigma^+)$ for the space of Borel probability measures on $\Sigma^+$ and $\mathcal P_\sigma (\Sigma^+)$ for the space of shift invariant measures from $\mathcal P (\Sigma^+)$.  Equip both these spaces of measures with the weak topology, which can be metrised with either the Levy-Prokhorov or Wasserstein metric for example, and write $\text{spt}(\mu)$ for the support of a measure $\mu$.  Let  $x = (x_0, x_1, x_2, \cdots) \in \Sigma^+$  and $n \geq 1$ and define ``blow up'' maps  $T_{n, x}:  \Sigma^+ \to \Sigma^+ $ by 
$$
T_{n, x} (y_0, y_1, y_2, \cdots)
= (x_0, \cdots , x_{n-1}, y_0, y_1, y_2, \cdots)
$$
i.e., inserting the first $n$ terms from the sequence $x$ at the front of $y$.  We define \emph{cylinder sets} as
\[
[x_0, \dots, x_{n-1} ]_0^{n-1} \ = \ T_{n, x} \Sigma^+ \  = \  \{y \in \Sigma^+
 \hbox{ : } y_i = x_i \hbox{ for } 0 \leq i \leq n-1\} 
\]
and the following definition allows us to blow up $\mu$ on the cylinders containing $x$.
\begin{definition}
The maps $T_{n, x}$ induce a sequence of new measures $\mu_{x,n} \in \mathcal{P}(\Sigma^+)$, which are called \emph{minimeasures}, defined by
$$
\mu_{x,n}(A) =   \frac{ \mu\left(T_{n, x} A \right) }{\mu\left(T_{n, x} \Sigma^+ \right)}
$$
for measurable $A \subset \Sigma^+$, provided $\mu\left(T_{n, x} \Sigma^+ \right)>0$.   This sequence of minimeasures is called the \emph{scaling scenery} of $\mu$ at $x$ and any weak-$*$ accumulation point of the scaling scenery is called a \emph{micromeasure} of $\mu$ at $x$.
\end{definition}

Recently, there has been considerable interest in understanding the limiting behaviour of the scaling scenery and many closely related concepts.  It is perhaps unreasonable to expect the scaling scenery to converge, but one is interested in which minimeasures are most prevalent in the scaling scenery and to this end we define a sequence of \emph{measures on the space of measures} by taking Ces\`aro averages of Dirac measures on the minimeasures along the scaling scenery and then hope that this converges.  Let $\mathcal D (\Sigma^+) = \mathcal P (\mathcal P (\Sigma^+))$ be the space of Borel measures on $\mathcal P (\Sigma^+)$, which we call the space of \emph{distributions}.  

\begin{definition}
The \emph{$N$th scenery distribution} of $\mu$ at $x \in \text{\emph{spt}}(\mu)$ is
\[
\frac{1}{N} \sum_{n=0}^{N-1} \delta_{\mu_{x,n}} \ \in \ \mathcal D (\Sigma^+)
\]
and any weak-$*$ accumulation point of the sequence of $N$th scenery distributions is called a \emph{micromeasure distribution}. 
\end{definition}

It is straightforward to see that any micromeasure distribution at $x$ is supported on the set of micromeasures of $\mu$ at $x$.  Gavish introduced the concept of a measure being \emph{uniformly scaling} if the scenery distributions converge almost everywhere to a common micromeasure distribution.

\begin{definition}
A measure $\mu \in \mathcal P (\Sigma^+)$ is a \emph{uniformly scaling measure} if there exists a distribution $Q \in \mathcal D (\Sigma^+)$ such that at $\mu$ almost every $x \in \Sigma^+$
\[
\frac{1}{N} \sum_{n=0}^{N-1} \delta_{\mu_{x,n}}  \ \to_{w^*}  \ Q.
\]
In this case we say that $\mu$ \emph{generates} the distribution $Q$.
\end{definition}

\subsection{Ergodic measures and the reverse Jacobian}

Let $\Sigma  =\prod_{-\infty}^{\infty}\mathcal{I}$  be the space of infinite two-sided sequences where we write $x= (x_m , \dots, x_{n}) \in \prod_m^{n} \mathcal{I}$ (with $m<n$) and $x= (\dots, x_{-1} ; x_0, x_1, \dots) \in \Sigma$. We also write $\sigma$ for the (invertible) left shift map on $\Sigma$ given by 
$$
\sigma ( \cdots, x_{-2}, x_{-1}; x_0, x_1, x_2, \cdots) 
= ( \cdots,  x_{-1}, x_0; x_1, x_2, x_3, \cdots)
$$
and let $\mathcal P_\sigma(\Sigma)$ denote the space of shift invariant Borel probability measures on $\Sigma$.

\begin{lemma}\label{measures}
There is a natural bijection between the spaces $\mathcal P_\sigma(\Sigma^+)$
and $\mathcal  P_\sigma(\Sigma)$. Moreover, this map is also a bijection between ergodic measures on $\Sigma^+$ and $\Sigma$.
\end{lemma}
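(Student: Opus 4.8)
The plan is to realise $(\Sigma, \sigma)$ as the \emph{natural extension} of $(\Sigma^+, \sigma)$ and to transport measures across the coordinate projection
\[
\pi \colon \Sigma \to \Sigma^+, \qquad \pi(\dots, x_{-1}; x_0, x_1, \dots) = (x_0, x_1, \dots),
\]
which is continuous, surjective and satisfies $\pi \circ \sigma = \sigma \circ \pi$. In one direction I would send $\nu \in \mathcal P_\sigma(\Sigma)$ to its push-forward $\pi_*\nu$; shift-invariance is preserved since $\sigma_* \pi_* \nu = \pi_* \sigma_* \nu = \pi_* \nu$, so this lands in $\mathcal P_\sigma(\Sigma^+)$. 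For the reverse direction, given $\mu \in \mathcal P_\sigma(\Sigma^+)$ I would prescribe the measure of a two-sided cylinder by using invariance to slide it into the non-negative coordinates, setting
\[
\nu\big([a_{-m}, \dots, a_n]_{-m}^{n}\big) = \mu\big([a_{-m}, \dots, a_n]_{0}^{m+n}\big).
\]
The Kolmogorov consistency of this prescription under refining and enlarging the coordinate window is precisely the statement that $\mu$ is $\sigma$-invariant, so the Kolmogorov extension theorem produces a unique Borel probability measure $\nu$ on $\Sigma$, and a short check shows $\nu$ is $\sigma$-invariant with $\pi_* \nu = \mu$.

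Next I would verify the two assignments are mutually inverse. That $\pi_*$ undoes the extension is immediate from the defining formula. For the other composition, I would observe that if $\nu \in \mathcal P_\sigma(\Sigma)$ then its own invariance forces the measure of every two-sided cylinder to agree with the right-hand side above computed from $\pi_* \nu$; uniqueness in the extension theorem then identifies $\nu$ with the measure built from $\pi_* \nu$. This yields the claimed bijection $\mathcal P_\sigma(\Sigma) \leftrightarrow \mathcal P_\sigma(\Sigma^+)$.

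It remains to match ergodicity. One direction is soft: an invariant function on $\Sigma^+$ pulls back under $\pi$ to a $\sigma$-invariant function on $\Sigma$, so if $\nu$ is ergodic then $\mu = \pi_* \nu$ is ergodic, being a factor of an ergodic system. The substantive direction is that $\mu$ ergodic implies $\nu$ ergodic, where I would exploit the natural-extension structure. Writing $\mathcal B^+ = \pi^{-1}(\text{Borel sets of } \Sigma^+)$, the $\sigma$-algebras $\sigma^n \mathcal B^+$ (sets depending only on coordinates $\geq -n$) increase to the full Borel $\sigma$-algebra on $\Sigma$. Given $f \in L^2(\nu)$ with $f \circ \sigma = f$, set $h = \mathbb E[f \mid \mathcal B^+]$. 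Using the invertibility of $\sigma$ on $\Sigma$ together with $f \circ \sigma^n = f$, one computes $\mathbb E[f \mid \sigma^n \mathcal B^+] = h \circ \sigma^{-n}$, which has $\|h \circ \sigma^{-n}\|_2 = \|h\|_2$. Since these conditional expectations converge to $f$ in $L^2$ by the increasing martingale theorem, we obtain $\|f\|_2 = \|h\|_2$, hence $f = h$ is $\mathcal B^+$-measurable, say $f = \phi \circ \pi$. Then $f \circ \sigma = f$ becomes $\phi \circ \sigma = \phi$ on $\Sigma^+$, and ergodicity of $\mu$ forces $\phi$, and therefore $f$, to be constant.

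The main obstacle is this last implication: because the one-sided shift is not invertible one cannot simply project an invariant set and argue directly, so the martingale argument—reducing a two-sided invariant function to a one-sided invariant function via the conditional expectation onto the future coordinates—is the crux. The bijection itself, by contrast, is a routine consistency-and-extension argument.
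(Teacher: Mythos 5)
Your proposal is correct and follows the same route as the paper: the bijection is built from exactly the same cylinder formula $\nu\big([a_{-m},\dots,a_n]_{-m}^{n}\big)=\mu\big([a_{-m},\dots,a_n]_{0}^{m+n}\big)$, with invariance of $\mu$ supplying Kolmogorov consistency, which is all the paper records. The ergodicity correspondence is declared ``straightforward and omitted'' in the paper, and your martingale argument (showing an invariant $f\in L^2(\nu)$ equals its conditional expectation onto the future coordinates, hence descends to an invariant function on $\Sigma^+$) is a correct and complete way to supply the nontrivial direction.
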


\begin{proof}
For the first part we use the (unique) extension of an invariant measure $\mu$ on $\Sigma^+$
to $\Sigma$  given by $\mu([x_{m}, \cdots, x_{n}]_{m}^n)
:= \mu([x_{m}, \cdots, x_{n}]_{0}^{n-m})$ (with $m<n$).  The fact that ergodic measures are paired with ergodic measures is straightforward and omitted.
\end{proof}

Given an ergodic measure $\mu \in \mathcal{P}_\sigma(\Sigma)$, define a sequence of functions $g_n: \Sigma \to [0,1]$ by
$$
g_n(x) = \frac{\mu \big( [x_{-n}, x_{-(n-1)}, \cdots, x_{-1} ]_{-n}^{-1}\big)}{\mu\big([x_{-n},  x_{-(n-1)},  \cdots, x_{-2}]_{-n}^{-2}\big)}
$$
for $x = (x_l)_{l=-\infty}^{\infty} \in \text{spt}(\mu)$ and $g_n(x) = 0$ for $x \in \Sigma \setminus \text{spt}(\mu)$ .  From this sequence of functions we are able to define the reverse Jacobian $g$ which we will need to state our main result, Theorem \ref{main}.
\begin{lemma} \label{reverseg}
The limit $g(x) := \lim_{n \to +\infty} g_n(x)$ exists for $\mu$ almost every $x \in \Sigma$ and, moreover, $g \in L^1(\Sigma, \mu)$.
\end{lemma}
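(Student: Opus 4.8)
The plan is to recognise each $g_n$ as a conditional expectation with respect to an increasing filtration and then invoke the martingale convergence theorem. Concretely, for $n \geq 2$ let $\mathcal{F}_n$ denote the (finite) $\sigma$-algebra on $\Sigma$ generated by the coordinates $x_{-n}, x_{-(n-1)}, \dots, x_{-2}$. Since increasing $n$ merely prepends the extra coordinate $x_{-n}$ on the far left, we have $\mathcal{F}_n \subseteq \mathcal{F}_{n+1}$, so $(\mathcal{F}_n)_{n \geq 2}$ is an increasing filtration, and I write $\mathcal{F}_\infty = \sigma\big(\bigcup_{n} \mathcal{F}_n\big)$ for the $\sigma$-algebra of the ``past'', i.e. the one generated by all coordinates in positions $\leq -2$.

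The identity I would establish first is that $g_n$ is a diagonal combination of conditional expectations. For each symbol $a \in \mathcal{I}$ put $A_a = \{x \in \Sigma : x_{-1} = a\}$. Because $\mathcal{F}_n$ is generated by a finite partition into cylinders, the conditional expectation $E_\mu[\mathbf{1}_{A_a} \mid \mathcal{F}_n]$ is constant on each cylinder $[x_{-n}, \dots, x_{-2}]_{-n}^{-2}$ of positive measure, where, using $A_a \cap [x_{-n}, \dots, x_{-2}]_{-n}^{-2} = [x_{-n}, \dots, x_{-2}, a]_{-n}^{-1}$, it equals the corresponding ratio of cylinder measures. Evaluating at $a = x_{-1}$ recovers exactly the definition of $g_n$, so that
$$g_n(x) = \sum_{a \in \mathcal{I}} \mathbf{1}_{A_a}(x)\, E_\mu[\mathbf{1}_{A_a} \mid \mathcal{F}_n](x)$$
for $\mu$-almost every $x$; the partition cells of measure zero are discarded automatically, which also handles the potential vanishing of the denominator, noting that every cylinder meeting $\text{spt}(\mu)$ has positive measure.

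With this in hand the convergence is immediate. For each fixed $a$, the sequence $n \mapsto E_\mu[\mathbf{1}_{A_a} \mid \mathcal{F}_n]$ is a martingale bounded by $1$ for the increasing filtration $(\mathcal{F}_n)$, so by Doob's martingale convergence theorem (L\'evy's upward theorem) it converges, $\mu$-almost everywhere and in $L^1$, to $E_\mu[\mathbf{1}_{A_a} \mid \mathcal{F}_\infty]$. As $\mathcal{I}$ is finite, outside a single $\mu$-null set all $k$ of these martingales converge simultaneously; since the weights $\mathbf{1}_{A_a}$ do not depend on $n$, the displayed finite sum converges $\mu$-almost everywhere to
$$g(x) = \sum_{a \in \mathcal{I}} \mathbf{1}_{A_a}(x)\, E_\mu[\mathbf{1}_{A_a} \mid \mathcal{F}_\infty](x),$$
which is precisely the conditional probability that the symbol in position $-1$ equals $x_{-1}$ given the entire past. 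This yields the existence of the limit $\mu$-almost everywhere.

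Finally, the $L^1$ membership costs nothing: each $g_n$ is a ratio of nested cylinder measures, so $0 \leq g_n \leq 1$, whence the almost-everywhere limit satisfies $0 \leq g \leq 1$, and since $\mu$ is a probability measure this gives $\|g\|_{L^1(\Sigma,\mu)} \leq 1 < \infty$. I expect the only genuinely delicate point to be the bookkeeping in the first identity: verifying that substituting the ``moving'' value $a = x_{-1}$ into the $\mathcal{F}_n$-conditional expectation really reproduces $g_n$, and confirming that the finitely many exceptional null sets, where denominators vanish or individual martingales fail to converge, can be amalgamated into one $\mu$-null set. Note that ergodicity of $\mu$ plays no role here; indeed shift-invariance is not required, and the lemma holds for an arbitrary Borel probability measure on $\Sigma$.
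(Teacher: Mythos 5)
Your proof is correct, and it reaches the same limit object by a more elementary and self-contained route than the paper. The paper's proof passes to the space $\Sigma^-$ of left-infinite sequences, interprets $g_n$ as the reciprocal of the Radon--Nikodym derivative of $\mu^-\circ\sigma^{-1}$ with respect to $\mu^-$ restricted to the length-$n$ cylinder $\sigma$-algebra, and then quotes the theory of Jacobians of countable-to-one endomorphisms (Parry, Parry--Walters) together with a convergence result of Parthasarathy to conclude. Your argument instead writes $g_n(x)=\sum_{a\in\mathcal{I}}\mathbf{1}_{A_a}(x)\,E_\mu[\mathbf{1}_{A_a}\mid\mathcal{F}_n](x)$ and applies L\'evy's upward theorem to each of the finitely many bounded martingales; the verification of that identity on positive-measure cylinders is exactly as you describe, and the uniform bound $0\le g_n\le 1$ disposes of the $L^1$ claim. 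The underlying convergence mechanism is the same (martingale convergence of conditional measures along an increasing filtration), but your packaging avoids the Lebesgue-space/Jacobian machinery entirely and, as you correctly observe, shows the lemma holds for any Borel probability measure on $\Sigma$ without invariance or ergodicity. What the paper's framing buys in exchange is the explicit identification of $g$ as the reverse Jacobian of the right shift on $\Sigma^-$, which is the conceptual link exploited later (Lemma \ref{reverseggibbs}) to recognise $\log g$ as the classical $g$-function potential in the Gibbs case; your description of $g$ as the conditional probability of the symbol at position $-1$ given the past is the same object viewed probabilistically.
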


\begin{proof}
  Consider the space of left infinite sequences $\Sigma^-$ and let $\mu^-$ be the push forward of $\mu$ to $\Sigma^-$ via the natural restriction.  Let $\sigma^{-1}$ be the associated right shift and note that $\mu^-$ need not be $\sigma^{-1}$ invariant. Observe that $g_n$ only depends on past coordinates, i.e. $g_n(x) = g_n(x')$ if $x$ and $x'$ are such that $x_n = x_n'$ for all $n<0$, and so for $x \in \text{spt}(\mu)$
\[
g_n(x) = \frac{\mu^- \big( [x_{-n}, x_{-(n-1)}, \cdots, x_{-1} ]_{-n}^{-1}\big)}{\mu^- \circ \sigma^{-1}\big([x_{-n},  x_{-(n-1)},  \cdots, x_{-1}]_{-n}^{-1}\big)}
\]
which is the reciprocal of the Radon-Nikodym derivative $\text{d}(\mu^- \circ \sigma^{-1} )/ \text{d}\mu^- $ with respect to the $\sigma$-algebra generated by the cylinders of length $n$ in $\Sigma^-$.  Even though $\mu^-$ may be singular, the Radon-Nikodym derivative is well-defined because $\sigma^{-1}$ is countable to one, see \cite[Section 10-1]{Parry} and also \cite{ParryWalters}.  It follows from \cite[Proposition 48.1]{Parthasarathy} that $g_n$ converges almost surely to an $L^1$ function $g$, which is the Jacobian of $\text{d}(\mu^- \circ \sigma^{-1}) / \text{d}\mu^- $ with respect to the full Borel $\sigma$-algebra.
\end{proof}

\section{Scaling scenery for ergodic measures} \label{resultssection}

We now wish to make more precise statements about the scaling scenery and to do so we need to introduce some more notation.  Given any word $e = (e_0,e_1, \dots, e_{m-1}) \in \prod_0^{m-1} \mathcal{I}$, and $b>a>0$ the open sets
\[
\mathcal{U}^e ( a, b) : = \left\{\nu \in  \mathcal P(\Sigma^+) \hbox{ : }  \nu( [e_0 \dots e_{m-1} ]_{0}^{m-1} )  \in (a, b) \right\}
\]
generate the weak-$*$ topology on $\mathcal P(\Sigma^+)$ and so determining the value of a distribution on these generating sets determines it uniquely.

\begin{thm} \label{main}
Every ergodic measure $\mu \in \mathcal{P}_\sigma(\Sigma^+)$ is a uniformly scaling measure.  Moreover, for a given ergodic $\mu \in \mathcal{P}_\sigma(\Sigma^+)$ the generated the distribution $Q \in \mathcal{D}(\Sigma^+)$ is characterised as follows. Also write $\mu$ for the associated two-sided ergodic measure from Lemma \ref{measures} and let $g  \in L^1(\Sigma, \mu)$ be given by Lemma \ref{reverseg}.  Then
$$
Q \left(\mathcal{U} ( [e_0 \dots e_{m-1} ]_{0}^{m-1}, a, b) \right) \ = \  \mu \left(
\left\{
y \in \Sigma \hbox{ : } a < \prod_{k=1}^{m}  g \big(\sigma^k(y^-e)\big) < b
\right\}
\right)
$$
for any cylinder $[e_0 \cdots e_{m-1}]_0^{m-1}$ and $a < b$ and where we write $ y^-e = ( \cdots, y_{-2}, y_{-1},  e_0 \cdots e_{m-1}, \cdots )$ observing that since $g$ only depends on past coordinates it does not matter how we complete the sequence to the right.
\end{thm}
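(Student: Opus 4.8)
The plan is to pass to the two-sided model of Lemma \ref{measures}, rewrite each minimeasure as a conditional probability evaluated along the orbit of the shift, and then invoke the Birkhoff ergodic theorem in a form robust to a moving summand. First I would reduce the weak-$*$ convergence to a statement about finitely many cylinders. Since $\Sigma^+$ is compact and cylinders are clopen, every evaluation $\nu \mapsto \nu([e_0 \dots e_{m-1}]_0^{m-1})$ is continuous on $\mathcal P(\Sigma^+)$, and these maps separate points; by the Stone--Weierstrass theorem the algebra they generate is dense in $C(\mathcal P(\Sigma^+))$. Hence it is enough to prove that $\frac1N \sum_{n=0}^{N-1} F(\mu_{x,n})$ converges to the correct value for every $F$ that is a finite product of such evaluations, and since there are only countably many cylinders this yields a single full-measure set of points on which the scenery distributions converge weakly.

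The key identity is that, for a two-sided $\mu$-typical point $w$ with future $x = (w_0, w_1, \dots)$,
\[
\mu_{x,n}\big([e_0 \dots e_{m-1}]_0^{m-1}\big) = \frac{\mu\big([w_0 \dots w_{n-1}\, e_0 \dots e_{m-1}]\big)}{\mu\big([w_0 \dots w_{n-1}]\big)} = G^{(n)}_e(\sigma^n w),
\]
where $G^{(j)}_e(w) := \mathbb{E}_\mu\big[\mathbf{1}_{[e_0 \dots e_{m-1}]_0^{m-1}} \,\big|\, w_{-j}, \dots, w_{-1}\big]$ is the conditional probability that coordinates $0, \dots, m-1$ spell $e$ given the length-$j$ past, the second equality being shift-invariance used to reposition the block at negative coordinates. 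The decisive point is that the block $e$ is carried \emph{inside} the function $G^{(n)}_e$, so that it is evaluated exactly at $\sigma^n w$, with no auxiliary symbols inserted into the underlying sequence; this is what lets me apply an ergodic theorem cleanly.

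Next I would identify the limit. By Lévy's upward martingale convergence theorem the increasing past $\sigma$-algebras give $G^{(j)}_e \to \widehat G_e := \mathbb{E}_\mu[\mathbf{1}_{[e_0 \dots e_{m-1}]_0^{m-1}} \mid \text{past}]$ almost everywhere and in $L^1$. Telescoping this conditional probability symbol by symbol and using shift-invariance together with the description of $g$ in Lemma \ref{reverseg} as the conditional probability of the coordinate at $-1$ given the infinite past identifies $\widehat G_e(w) = \prod_{k=1}^m g(\sigma^k(w^- e))$, in the notation of the theorem. The numbers $\{\widehat G_e(w)\}_e$ form a consistent family of conditional probabilities, so they are the cylinder values of a measure $\nu_w \in \mathcal P(\Sigma^+)$ (the conditional law of the future given the past), and I take $Q$ to be the law of $w \mapsto \nu_w$, which then satisfies the displayed formula by construction.

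The crux is the moving-target ergodic estimate. For each $e$ it suffices to show $\frac1N \sum_{n=0}^{N-1} H_n(\sigma^n w) \to 0$ almost everywhere, where $H_n := |G^{(n)}_e - \widehat G_e|$ satisfies $0 \le H_n \le 1$ and $H_n \to 0$ almost everywhere; granting this and using that products are $1$-Lipschitz in each $[0,1]$-coordinate gives $\frac1N \sum_n F(\mu_{x,n}) - \frac1N \sum_n F(\nu_{\sigma^n w}) \to 0$, while Birkhoff's theorem applied to the bounded measurable function $w \mapsto F(\nu_w)$ supplies $\frac1N \sum_n F(\nu_{\sigma^n w}) \to \int F(\nu_w)\, d\mu = \int F\, dQ$. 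The obstacle is precisely that the summand depends on $n$, so the plain ergodic theorem does not apply; I expect this to be the main difficulty and would resolve it with an envelope argument of Maker type: for fixed $M$ one has $H_n \le \Phi_M := \sup_{j \ge M} H_j$ once $n \ge M$, so Birkhoff gives $\limsup_N \frac1N \sum_{n=0}^{N-1} H_n(\sigma^n w) \le \int \Phi_M \, d\mu$ for almost every $w$, and $\int \Phi_M\, d\mu \downarrow 0$ as $M \to \infty$ by dominated convergence. Finally, since every quantity above depends on $w$ only through its future $x$, and the future-marginal of the two-sided measure is the one-sided $\mu$, the almost-everywhere statement in $w$ transfers to $\mu$-almost every $x \in \Sigma^+$, establishing that $\mu$ is uniformly scaling and generates $Q$.
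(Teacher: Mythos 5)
Your proposal is correct, and its skeleton coincides with the paper's: the decisive identity $\mu_{x,n}([e_0\dots e_{m-1}]_0^{m-1}) = g_n^e(\sigma^n x)$ (your $G^{(n)}_e(\sigma^n w)$, equation \eqref{rewritekey} in the paper), the almost-everywhere convergence $g_n^e \to g^e = \prod_{k=1}^m g(\sigma^k(\,\cdot\,^-e))$ (Lemma \ref{realising}), an application of Birkhoff, and a device to tame the $n$-dependence of the summand. Where you genuinely diverge is in two technical choices. First, you test the scenery distributions against continuous functions --- finite products of cylinder evaluations, dense in $C(\mathcal P(\Sigma^+))$ by Stone--Weierstrass --- whereas the paper counts visits to the sub-basic open sets $\mathcal{U}^e(a,b)$. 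The paper's route forces it to pass from $\mu(\{g^e \in (a\mp\epsilon, b\pm\epsilon)\})$ to $\mu(\{g^e\in(a,b)\})$ as $\epsilon\to 0$, which strictly speaking only works at $a,b$ that are not atoms of the law of $g^e$ (a dense set of such $a,b$ still determines $Q$, but the paper does not say this); your test-function reduction sidesteps the boundary issue entirely. Second, you replace the paper's Egorov set $B$ (Lemma \ref{approx}) with a Maker-type envelope $\Phi_M = \sup_{j\ge M}|G^{(j)}_e - \widehat G_e|$ together with Birkhoff and dominated convergence; the two devices are interchangeable here and of comparable length. Your approach also buys a cleaner conceptual description of the limit: $Q$ is the law of $w\mapsto \nu_w$, the conditional distribution of the future given the past, from which the paper's cylinder formula falls out by evaluation. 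One shared point that both you and the paper treat lightly is the identification of the limit $\widehat G_e(w)$ with the product $\prod_{k=1}^m g(\sigma^k(w^-e))$: the telescoped factors are evaluated at the non-typical points $\sigma^k(w^-e)$, so one must check that these lie in the convergence set of $g_n$ for $\mu$-a.e.\ $w$; your decision to get the convergence of $G^{(n)}_e$ itself directly from L\'evy's upward theorem (rather than factor by factor) means this issue only touches the final formula for $Q$, not the uniform scaling statement.
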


The relevance of extensions to bi-infinite sequences in the context of blowing up a set or measure has been observed before.  In particular, see Sullivan's limit diffeomorphisms \cite{Sullivan} and subsequent developments and applications of these ideas \cite{Quas, BedfordFisher2, HochmanShmerkin}.  One heuristic justification is that the positive coordinates give location and the negative coordinates give distortion as one zooms in at that location.  Finally we point out that it is easy to construct invariant non-ergodic measures which are not uniformly scaling.  For such examples the scenery distributions almost surely converge to a common distribution within each ergodic component, but the distributions can vary with ergodic component.

\section{Proof of Theorem \ref{main}} \label{mainproof}

Throughout this section we will write $\mu$ both for the original ergodic measure in $\mathcal{P}_\sigma(\Sigma^+)$ and for the associated ergodic measure in $\mathcal{P}_\sigma(\Sigma)$ from Lemma \ref{measures}. Given a word  $e = (e_0,e_1, \dots, e_{m-1}) \in \prod_0^{m-1} \mathcal{I}$, let us define a sequence of functions
 $g_n^e: \Sigma \to [0,1]$ by 
$$
g_n^e(x) = \frac{\mu \big( [x_{-n}, x_{-(n-1)}, \cdots, x_{-1}; e_0,e_1, \dots, e_{m-1} ]_{-n}^{m-1}\big)}{\mu\big([x_{-n}, x_{-(n-1)}, \cdots, x_{-1}]_{-n}^{-1}\big)}
$$
for $x = (x_l)_{l=-\infty}^{\infty} \in \text{spt}(\mu)$ and $g_n^e(x) =0$ for $x \in \Sigma \setminus \text{spt}(\mu)$.

\begin{lemma}\label{realising}

For $\mu$ almost every $x \in \Sigma$, the sequence $g_n^e(x)$ converges and 
$$
 \lim_{n \to +\infty}g_n^e(x)  \ = \  \prod_{k=1}^{m} g\big(\sigma^k(x^-e)\big)  \ =: \  g^e(x)
$$
where $x^-e = (\dots, x_{-n}, x_{-(n-1)}, \cdots, x_{-1}; e_0,e_1, \dots, e_{m-1}, \dots)$ recalling that $g$ only depends on the past coordinates and so it does not matter how $x^-e$ is filled in to the right.
\end{lemma}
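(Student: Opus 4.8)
The plan is to derive an exact telescoping identity expressing $g_n^e$ as a product of the functions $g_{n+k}$ evaluated at shifts of the modified point $x^-e$, and then to pass to the limit factor by factor using Lemma \ref{reverseg}. First I would exploit shift invariance of $\mu$ to observe that the measure of a cylinder depends only on the word defining it and not on its position, so that, writing $\mu[w]$ for the common measure of any cylinder carrying the word $w$,
\[
g_n^e(x) = \frac{\mu[x_{-n}\cdots x_{-1}\, e_0\cdots e_{m-1}]}{\mu[x_{-n}\cdots x_{-1}]}.
\]
Telescoping over the appended letters $e_0,\dots,e_{m-1}$ gives $g_n^e(x)=\prod_{j=0}^{m-1}\mu[x_{-n}\cdots x_{-1}e_0\cdots e_j]\big/\mu[x_{-n}\cdots x_{-1}e_0\cdots e_{j-1}]$, where the $j=0$ denominator is $\mu[x_{-n}\cdots x_{-1}]$. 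For each factor I would relocate the word by shift invariance so that its final letter sits at coordinate $-1$; since the past coordinates of $\sigma^{j+1}(x^-e)$ read $e_j,e_{j-1},\dots,e_0,x_{-1},x_{-2},\dots$, the $j$-th factor is exactly $g_{n+j+1}\big(\sigma^{j+1}(x^-e)\big)$. Reindexing by $k=j+1$ yields the exact identity $g_n^e(x)=\prod_{k=1}^m g_{n+k}\big(\sigma^k(x^-e)\big)$, valid for every $n$ and every $x\in\text{spt}(\mu)$.

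With the identity in hand, letting $n\to\infty$ sends each index $n+k\to\infty$, so I would like to invoke Lemma \ref{reverseg} to conclude $g_{n+k}\big(\sigma^k(x^-e)\big)\to g\big(\sigma^k(x^-e)\big)$ for each fixed $k$, whence the product converges to $\prod_{k=1}^m g\big(\sigma^k(x^-e)\big)=g^e(x)$. The main obstacle is the justification of this last step at $\mu$ almost every $x$: Lemma \ref{reverseg} only guarantees $g_N\to g$ off a $\mu$-null set $E$, while the points $\sigma^k(x^-e)$ at which I must evaluate have had their future replaced by the fixed word $e$, so a priori they could fall in $E$.

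To control this I would first restrict attention to the cylinder $[e_0\cdots e_{m-1}]_0^{m-1}$. For $x$ in this cylinder the sequences $x^-e$ and $x$ agree in every coordinate $\le m-1$, hence $\sigma^k(x^-e)$ and $\sigma^k(x)$ have identical past coordinates for $1\le k\le m$, and since $g$ and $g_N$ depend only on past coordinates the convergence at $\sigma^k(x^-e)$ is equivalent to convergence at $\sigma^k(x)$. As $\sigma$ preserves $\mu$, the latter holds for $\mu$ almost every $x$, so the conclusion of the lemma holds on $[e_0\cdots e_{m-1}]_0^{m-1}$. Because $g_n^e$, $g^e$ and the convergence statement all depend on $x$ only through its past, I would then transfer this to $\mu$ almost every $x\in\Sigma$ by disintegrating $\mu$ over the past $\sigma$-algebra: the only pasts not already covered are those carrying zero conditional mass on $[e_0\cdots e_{m-1}]_0^{m-1}$, and the set of such pasts is exactly where the preceding argument says nothing.

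The delicate bookkeeping lies in this final transfer, and it is where I expect to spend the most care. On the pasts carrying zero conditional mass on $e$, the martingale convergence theorem applied to the increasing past $\sigma$-algebras shows that $g_n^e(x)=\mu\big([e_0\cdots e_{m-1}]_0^{m-1}\mid \text{past}\big)(x)$ in the limit, so that $g_n^e(x)\to 0$ for $\mu$ almost every such $x$; one must then check that the right-hand product $\prod_{k=1}^m g\big(\sigma^k(x^-e)\big)$ is well defined and also equal to $0$ on these pasts, so that both sides of the asserted equality vanish and agree. Establishing that this degenerate case genuinely forms a $\mu$-null exception (rather than a set where the product fails to be defined) is the crux; once it is settled, combining the two regimes yields $\mu$ almost everywhere convergence of $g_n^e$ to $g^e$, completing the proof.
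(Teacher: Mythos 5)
Your proof follows essentially the same route as the paper's: the exact telescoping identity $g_n^e(x) = \prod_{k=1}^m g_{n+k}\big(\sigma^k(x^-e)\big)$ followed by an application of Lemma \ref{reverseg} to each factor. The extra care you take over the almost-everywhere quantifier --- restricting to the cylinder $[e_0\cdots e_{m-1}]_0^{m-1}$, exploiting past-dependence and shift invariance, and then disintegrating over the past $\sigma$-algebra --- addresses a point the paper passes over in one line (it simply asserts the factorwise convergence ``for $\mu$ almost every $x$ by Lemma \ref{reverseg}''), so your version is, if anything, the more careful of the two.
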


\begin{proof}
We assume that $\mu\big([x_{-n}, x_{-(n-1)}, \cdots, x_{-1}; e_0,e_1, \dots, e_{m-1} ]_{-n}^{m-1}\big) > 0$ for all $n \in \mathbb{N}$ and that $x \in \text{spt}(\mu)$. If this is not the case then the result is trivial and $g^e(x) = 0$.   We have
\begin{eqnarray*}
g_n^e(x) &=& \frac{\mu\big([x_{-n}, x_{-(n-1)}, \cdots, x_{-1}; e_0,e_1, \dots, e_{m-1} ]_{-n}^{m-1}\big)}{\mu\big([x_{-n}, x_{-(n-1)}, \cdots, x_{-1}]_{-n}^{-1}\big)} \\ \\
&=&\prod_{k=1}^{m} \frac{\mu\big([x_{-n}, x_{-(n-1)}, \cdots, x_{-1}; e_0,e_1, \dots, e_{k-1} ]_{-n}^{k-1}\big)}{\mu\big([x_{-n}, x_{-(n-1)}, \cdots, x_{-1}; e_0,e_1, \dots, e_{k-2} ]_{-n}^{k-2}\big)}  \\ \\
&=&\prod_{k=1}^{m} g_{n+k}\big(\sigma^k(x^-e)\big) \\ \\ 
& \to&  \prod_{k=1}^{m} g\big(\sigma^k(x^-e)\big)
\end{eqnarray*}
for $\mu$ almost every $x \in \Sigma$ as $n \to +\infty$ by Lemma \ref{reverseg}.
\end{proof}

\begin{lemma}\label{approx} 
Let $e  \in \prod_0^{m-1} \mathcal{I}$.  Then for any  $\epsilon, \delta  > 0$ we can choose a measurable set
$B \subset \Sigma$ with $\mu(B) <\delta$  and 
$n_0$ such that for $n \geq n_0$
we have 
$$
\sup_{x \in \Sigma \setminus B}\left| g^e_n(x)  - g^e(x)\right| < \epsilon.
$$
\end{lemma}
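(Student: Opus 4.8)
The plan is to recognise Lemma \ref{approx} as nothing more than Egorov's theorem applied to the almost everywhere convergence already supplied by Lemma \ref{realising}. That lemma gives $g_n^e(x) \to g^e(x)$ for $\mu$-almost every $x \in \Sigma$, and since $\mu$ is a probability measure the space $(\Sigma, \mu)$ is finite. Egorov's theorem then produces, for the given $\delta > 0$, a measurable set $B$ with $\mu(B) < \delta$ on which the convergence is uniform; uniform convergence on $\Sigma \setminus B$ says precisely that for every $\epsilon > 0$ there is an $n_0$ with $\sup_{x \in \Sigma \setminus B}|g_n^e(x) - g^e(x)| < \epsilon$ for all $n \geq n_0$, which is exactly the assertion. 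In fact Egorov gives the stronger statement that a single $B$ works for all $\epsilon$ simultaneously, so the lemma as stated (where $B$ may depend on $\epsilon$) follows a fortiori.

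If a self-contained argument is preferred, I would run the standard continuity-from-above proof. First fix $\epsilon, \delta > 0$ and define the nested, decreasing sequence of measurable sets
$$
A_N = \left\{ x \in \Sigma : \sup_{n \geq N} \big| g_n^e(x) - g^e(x) \big| \geq \epsilon/2 \right\}.
$$
For any $x$ at which $g_n^e(x) \to g^e(x)$ one has $\sup_{n \geq N}\big| g_n^e(x) - g^e(x)\big| \to 0$ as $N \to \infty$, so $x \notin A_N$ for all large $N$; hence $\mu\big(\bigcap_N A_N\big) = 0$. Since the $A_N$ are nested and $\mu$ is finite, continuity from above yields $\mu(A_N) \to 0$, and I may pick $n_0$ with $\mu(A_{n_0}) < \delta$ and set $B = A_{n_0}$. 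Then for every $x \in \Sigma \setminus B$ and every $n \geq n_0$ we have $\big| g_n^e(x) - g^e(x)\big| \leq \epsilon/2$, whence $\sup_{x \in \Sigma \setminus B}\big| g_n^e(x) - g^e(x)\big| \leq \epsilon/2 < \epsilon$, as required.

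There is essentially no substantive obstacle: all the analytic content sits in the a.e. convergence of Lemma \ref{realising}, and the rest is the routine passage from pointwise almost everywhere convergence to almost uniform convergence on a finite measure space. The only point needing a moment's care is the strictness of the final inequality, since a supremum of quantities each strictly below a threshold need not itself be strictly below it; this is precisely what the gap between the cut-off $\epsilon/2$ and the target $\epsilon$ in the definition of $A_N$ is designed to absorb.
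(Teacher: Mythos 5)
Your proposal is correct and matches the paper exactly: the authors' entire proof is ``This is an immediate consequence of Egorov's Theorem,'' applied to the almost everywhere convergence $g_n^e \to g^e$ from Lemma \ref{realising} on the probability space $(\Sigma,\mu)$. Your self-contained continuity-from-above argument is just the standard proof of the relevant special case of Egorov and is a harmless (correct) elaboration.
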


\begin{proof}
This is an immediate consequence of Egorov's Theorem.
\end{proof}

\begin{lemma}\label{limits}
Fix a measurable set $B \subseteq \Sigma$ and $a,b \in \mathbb{R}$ with $a<b$.  Then for $\mu$ almost every $x\in \Sigma$, as $N \to +\infty$ we have
\[
\frac{1}{N} \, \#\left\{
0 \leq n \leq N-1  \hbox{ : } \sigma^n x \in B
\right\}
\to \mu(B)
\]
and  
\[
\frac{1}{N} \,  \#\left\{
0 \leq n \leq N-1  \hbox{ : }g^e(\sigma^n x) \in (a,b)
\right\}
\to \mu(\{ y \in \Sigma  \hbox{ : }   g^e(y) \in (a,b)\}).
\]
\end{lemma}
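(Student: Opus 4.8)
The plan is to recognise both assertions as immediate applications of Birkhoff's pointwise ergodic theorem, exploiting that $\mu$ is ergodic with respect to the (invertible, measure-preserving) two-sided shift $\sigma$. The only preliminary work is to check that the relevant indicator functions lie in $L^1(\Sigma,\mu)$ and that the level set appearing in the second limit is genuinely measurable.

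For the first statement I would simply rewrite the counting average as a Birkhoff average of an indicator function,
\[
\frac{1}{N} \, \#\left\{ 0 \leq n \leq N-1 : \sigma^n x \in B \right\} = \frac{1}{N} \sum_{n=0}^{N-1} \mathbf{1}_B(\sigma^n x).
\]
Since $B$ is measurable we have $\mathbf{1}_B \in L^1(\Sigma,\mu)$, so Birkhoff's theorem together with the ergodicity of $\mu$ yields convergence of these averages to $\int_\Sigma \mathbf{1}_B \, \mathrm{d}\mu = \mu(B)$ for $\mu$-almost every $x$.

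For the second statement, write $A = \{ y \in \Sigma : g^e(y) \in (a,b) \} = (g^e)^{-1}\big((a,b)\big)$. The one point requiring attention is the measurability of $g^e$: by Lemma \ref{realising}, $g^e$ is the $\mu$-almost everywhere limit of the measurable functions $g_n^e$, hence is measurable once extended by $0$ on the $\mu$-null set where the limit fails. Consequently $A$ is measurable and $\mathbf{1}_A \in L^1(\Sigma,\mu)$. Since $g^e(\sigma^n x) \in (a,b)$ if and only if $\sigma^n x \in A$, the same rewriting gives
\[
\frac{1}{N} \, \#\left\{ 0 \leq n \leq N-1 : g^e(\sigma^n x) \in (a,b) \right\} = \frac{1}{N} \sum_{n=0}^{N-1} \mathbf{1}_A(\sigma^n x),
\]
and Birkhoff's theorem again delivers convergence to $\mu(A) = \mu(\{ y : g^e(y) \in (a,b)\})$ for $\mu$-almost every $x$.

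To get both conclusions simultaneously one intersects the two full-measure sets produced above, which is again of full measure. A minor technical caveat is that $g^e$ is only defined off a $\mu$-null set $\Sigma \setminus G$, so to make $g^e(\sigma^n x)$ meaningful for every $n \geq 0$ one restricts to the forward-invariant full-measure set $\bigcap_{n \geq 0} \sigma^{-n} G$ (full measure because $\mu$ is $\sigma$-invariant and a countable intersection of full-measure sets has full measure). I do not anticipate any genuine obstacle here: the entire content is Birkhoff's ergodic theorem, and the only care required is the routine verification that $g^e$ is measurable and the bookkeeping for the null set on which it is undefined.
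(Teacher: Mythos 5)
Your proposal is correct and follows exactly the paper's argument: the paper proves Lemma \ref{limits} by a direct application of the Birkhoff ergodic theorem for $\sigma: \Sigma \to \Sigma$ and the ergodic measure $\mu$, which is precisely what you do after rewriting the counts as ergodic averages of indicator functions. Your additional care about the measurability of $g^e$ and the null set where it is undefined is sensible bookkeeping that the paper leaves implicit.
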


\begin{proof}
This follows immediately by applying the Birkhoff ergodic theorem for $\sigma: \Sigma \to \Sigma$ and $\mu$.
\end{proof}

Observe that $\mu_{x,n}([e]_0^{m-1})$ is defined for all $x\in  \text{spt}(\mu)$ and so we can extend it to a function of $x = (x_l)_{l=-\infty}^{\infty} \in \Sigma$ by setting it to zero whenever $(x_l)_{l=0}^{\infty} \notin \text{spt}(\mu) \subseteq \Sigma^+$.   This has the advantage that
\begin{equation} \label{rewritekey}
\mu_{x,n}([e_0, \dots, e_{m-1}]_0^{m-1}) = \frac{\mu[x_{0}, x_{1}, \cdots, x_{n-1},e_0, \dots, e_{m-1}]_0^{m+n-1}}{\mu[x_{0}, x_{1}, \cdots, x_{n-1}]_0^{n-1}}
= g_n^e(\sigma^{n} x).
\end{equation}
In particular, for $x\in \Sigma$ the terms $\mu_{x,n}([e]_0^{m-1})$ depend only on the future coordinates.

We are now in position to prove Theorem \ref{main}.
\begin{proof}
Fix $e = (e_0,e_1, \dots, e_{m-1}) \in \prod_0^{m-1} \mathcal{I}$ and $a,b \in \mathbb{R}$ with $a<b$.  We will estimate the measure of $\mathcal{U}^e(a,b)$ for scenery distributions at generic $x \in \Sigma^+$.  The following fact is stated merely for clarity.

\begin{fact}
If 
$\left(
\frac{1}{N} \sum_{n=0}^{N-1}  \delta_{\mu_{x,n}}
\right) 
\left(
{\mathcal U}^e(a, b)\right)
$
converges for $\mu$ almost every $x\in \Sigma$ to a constant then 
$\left(
\frac{1}{N} \sum_{n=0}^{N-1}  \delta_{\mu_{x,n}}
\right) 
\left(
{\mathcal U}^e(a, b)\right)
$
converges for $\mu$ almost every $x\in \Sigma^+$ to the same constant.
\end{fact}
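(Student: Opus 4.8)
The plan is to exploit the observation recorded just before equation \eqref{rewritekey}, namely that for $x \in \Sigma$ the quantity $\mu_{x,n}([e]_0^{m-1})$ depends only on the future coordinates $(x_0, x_1, \dots)$. Consequently the averaged count
$$
F_N(x) := \left(\frac{1}{N} \sum_{n=0}^{N-1}  \delta_{\mu_{x,n}}\right)\left({\mathcal U}^e(a, b)\right) = \frac{1}{N} \, \#\left\{ 0 \leq n \leq N-1 : \mu_{x,n}([e]_0^{m-1}) \in (a,b) \right\}
$$
also depends only on the future coordinates of $x$, and therefore factors as $F_N = \tilde F_N \circ \pi$, where $\pi \colon \Sigma \to \Sigma^+$ is the projection discarding the negative coordinates and $\tilde F_N \colon \Sigma^+ \to \mathbb{R}$ is precisely the corresponding one-sided scenery count.

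First I would recall from the construction in Lemma \ref{measures} that the two-sided measure is an extension of the one-sided measure; in particular the marginal of $\mu$ on $\Sigma$ along the non-negative coordinates is the measure $\mu$ on $\Sigma^+$, so that $\pi$ pushes the former forward onto the latter, i.e.\ $\pi_* \mu = \mu$ under the identification of the excerpt. Next, writing $c$ for the constant limit, I would consider the convergence set $C := \{ z \in \Sigma^+ : \tilde F_N(z) \to c \}$, which is measurable since it is defined by a countable combination of conditions on the measurable functions $\tilde F_N$. Because $F_N = \tilde F_N \circ \pi$ and $\pi$ is measurable, the corresponding two-sided convergence set is exactly $\pi^{-1}(C)$.

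The conclusion then follows directly from the pushforward relation: the hypothesis states that $\mu(\pi^{-1}(C)) = \mu(\{ x \in \Sigma : F_N(x) \to c \}) = 1$, and by $\pi_* \mu = \mu$ this gives $\mu(C) = \mu(\pi^{-1}(C)) = 1$, which is exactly the assertion for $\mu$ almost every $x \in \Sigma^+$ with the same constant $c$. There is no genuine obstacle beyond this bookkeeping: the only points requiring care are confirming that the relevant function truly ignores the negative coordinates, so that it descends to $\Sigma^+$, and checking that the null set on which the minimeasures or the support condition misbehave is harmless, both of which are immediate from \eqref{rewritekey} and the fact that the conclusion is asserted only almost everywhere.
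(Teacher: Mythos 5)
Your argument is correct and is precisely the reasoning the paper relies on: the remark preceding \eqref{rewritekey} that $\mu_{x,n}([e]_0^{m-1})$ depends only on future coordinates, combined with the fact that the projection $\pi\colon\Sigma\to\Sigma^+$ pushes the two-sided measure of Lemma \ref{measures} forward to the one-sided one. The paper states this Fact without proof as an immediate consequence of those observations, so your write-up simply makes the same bookkeeping explicit.
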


Let $B \subseteq \Sigma$ and $n_0$ be taken from Lemma \ref{approx} and observe that for $N > n_0$ and for all $x\in \Sigma$ we have
$$
\begin{aligned}
\left(
\frac{1}{N} \sum_{n=0}^{N-1}  \delta_{\mu_{x,n}}
\right) 
\left(
{\mathcal U}^e(a, b)\right)
&= \frac{1}{N} \, \#\left\{
0 \leq n \leq N-1  \hbox{ : }  \mu_{x,n}([e_0, \dots, e_{m-1}]_0^{m-1}) \in (a, b)
\right\}\cr
&= \frac{1}{N} \,  \#\left\{
0 \leq n \leq N-1  \hbox{ : }  g_n^e(\sigma^n x) \in (a, b)
\right\}  \qquad \quad \text{by (\ref{rewritekey})}\cr
&\leq \frac{n_0}{N}  \ +  \ 
\frac{1}{N}  \, \#\left\{
0 \leq n \leq N-1  \hbox{ : } g^e(\sigma^nx) \in (a- \epsilon, b+\epsilon)
\right\}\cr
&\qquad + \  \frac{1}{N} \, \#\left\{
0  \leq n \leq N-1  \hbox{ : } \sigma^n x \in B
\right\}.
\cr
\end{aligned}
$$
Letting $N \to +\infty$  we can apply Lemma \ref{limits} and deduce that for $\mu$ almost every $x\in \Sigma^+$ if $Q \in \mathcal{D}(\Sigma^+)$ is an accumulation point of the scenery distributions at $x$, then 
$$
Q\left(
{\mathcal U}^e(a, b)\right) \leq \mu(\{ y \in \Sigma \hbox{ : } g^e(y)  \in (a-\epsilon, b+\epsilon)\}) + \delta.
$$
A similar argument shows that
$$
Q\left(
{\mathcal U}^e(a, b)\right) \geq \mu(\{ y \in \Sigma \hbox{ : } g^e(y)  \in (a+\epsilon, b-\epsilon) \}) - \delta.
$$
Since $\epsilon, \delta > 0$ are arbitrary and $\mathcal{D}(\Sigma^+)$ is sequentially compact by Prokhorov's Theorem, we deduce that for $\mu$ almost every $x \in \Sigma^+$ the scenery distributions at $x$ converge to a common distribution $Q \in \mathcal{D}(\Sigma^+)$ satisfying
\begin{eqnarray*}
Q\left(
{\mathcal U}^e(a, b)\right) &=& \mu(\{ y \in \Sigma \hbox{ : } g^e(y)  \in (a, b) \}) \\ \\
&=&  \mu \left(
\left\{
y \in \Sigma \hbox{ : } a < \prod_{k=1}^{m}  g\big(\sigma^k(y^-e)\big) < b
\right\}
\right)
\end{eqnarray*}
which completes the proof.
\end{proof}

\section{Scaling scenery for Gibbs measures} \label{resultssectiongibbs}

In this section we specialise to the setting of Gibbs measures and consider some simple examples.  Let $\phi: \Sigma_A^+ \to \mathbb{R}$ be a H\"older continuous potential for a subshift of finite type $\Sigma^+_A$.  A measure $\mu \in \mathcal{P}(\Sigma^+)$ supported on $\Sigma^+_A$ is called a \emph{Gibbs measure} for $\phi$ if there exists constants $C_1, C_2 >0 $ such that
\begin{equation} \label{gibbsbowen}
C_1\ \leq \ \frac{\mu\big([x_0, \dots, x_{n-1}]_0^{n-1}\big)}{\exp\big( \sum_{k=0}^{n-1} \phi(\sigma^kx) - n P(\phi)\big)} \ \leq \ C_2
\end{equation}
for all $x \in \Sigma^+_A$ and all $n \in \mathbb{N}$ and where $P(\phi)$ is the pressure of $\phi$, see \cite{Bowen}.  If $\Sigma_A^+$ is topologically mixing, then there is a unique shift invariant Gibbs measure $\mu = \mu_\phi \in \mathcal{P}_\sigma(\Sigma^+)$ and this Gibbs measure is ergodic. Two very simple examples of shift invariant Gibbs measures are \emph{Bernoulli measures} and \emph{Markov measures}.  We will use these as examples and so briefly recall their definitions.  Let $(p_i)_{i \in \mathcal{I}}$ be a strictly positive probability vector associated to $\mathcal{I}$.  Given the potential $\phi(x) = \log p_{x_0}$ for the full shift, the unique invariant Gibbs measure satisfies
\[
\mu\big([x_0, \dots, x_{n-1}]_0^{n-1}\big) \ = \ p_{x_0} \cdots p_{x_{n-1}}
\]
and is called a \emph{Bernoulli measure}.  One more level of complexity yields Markov measures.  Given an irreducible right stochastic matrix $P = \{p_{i,j}\}_{i,j \in \mathcal{I}}$, let $(\pi_i)_{i \in \mathcal{I}}$ be the unique left invariant eigenvector and define a potential $\phi(x) = \log p_{x_0, x_1}$.  The unique invariant Gibbs measure is called a \emph{Markov measure} and satisfies
\[
\mu\big([x_0, \dots, x_{n-1}]_0^{n-1}\big) \ = \ \pi_{x_0} p_{x_0, x_1} \cdots p_{x_{n-2}, x_{n-1}}.
\]
Markov measures are ergodic and supported on the subshift of finite type given by the transition matrix formed by replacing all non-zero entries in $P$ with 1s .  We will also utilise the theory of Gibbs measures on the two-sided shift space $\Sigma$ which are defined similarly, see \cite{Bowen}.

\begin{lemma} \label{reverseggibbs}
If $\mu$ is an invariant Gibbs measure for a H\"older potential, then $ \psi : = \log g$ is a H\"older potential for the corresponding invariant Gibbs measure on $\Sigma$ given by Lemma \ref{measures} where $g: \Sigma \to [0,1]$ is the (almost everywhere defined) reverse Jacobian function given by Lemma \ref{reverseg}.
\end{lemma}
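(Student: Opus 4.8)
The plan is to establish two facts: that $\psi=\log g$ is $\alpha$-Hölder on $\text{spt}(\mu)$, and that the two-sided measure $\mu$ from Lemma \ref{measures} is the Gibbs measure associated with $\psi$; by uniqueness of Gibbs measures for Hölder potentials on a topologically mixing subshift of finite type, this is exactly the assertion that $\psi$ is a Hölder potential for $\mu$. The natural starting point is a telescoping identity for the reverse Jacobian. Rewriting the cylinder measures via Lemma \ref{measures} and cancelling successively, one obtains for $x\in\text{spt}(\mu)$
\[
\mu\big([x_{-n},\dots,x_{-1}]_{-n}^{-1}\big)=\prod_{j=1}^{n} g_j\big(\sigma^{\,j-n}x\big),
\]
each factor being one of the functions $g_j$ of Lemma \ref{reverseg} at a shift of $x$. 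Conceptually, passing to the limit identifies $g(x)$ with the $\mu$-conditional probability of the symbol in position $0$ given the entire past, that is, with the forward $g$-function of the time-reversed measure; since the time reversal of a Hölder Gibbs measure is again Hölder Gibbs, this makes Hölder continuity plausible and indicates what must be proved.

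First I would record the crude bounds. Applying Bowen's inequality (\ref{gibbsbowen}) to the numerator and denominator of $g_n$ with a common continuation gives $g_n(x)\ge (C_1/C_2)\,e^{\inf\phi-P(\phi)}>0$ on the support, while $g_n\le 1$ by definition; hence $g$ is bounded away from $0$ and $1$ and $\psi=\log g$ is bounded. The substantial step is quantitative: I must show that $g_n\to g$ uniformly at a geometric rate and that $g$ is Hölder. Here the mere two-sided bound (\ref{gibbsbowen}) is insufficient, as it controls distortion only up to a fixed multiplicative constant, so I would pass to the normalised potential. Using the Ruelle--Perron--Frobenius theorem, choose the positive Hölder eigenfunction $h$, the eigenmeasure $\nu$ and the eigenvalue $e^{P(\phi)}$, set $\tilde\phi=\phi+\log h-\log(h\circ\sigma)-P(\phi)$ so that $\sum_{\sigma y=z}e^{\tilde\phi(y)}=1$, and use the representation $\mu([u])=\int e^{S_{|u|}\tilde\phi(uy)}\,d\mu(y)$. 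Since $\tilde\phi$ has geometrically decaying variations, the integrands depend on the continuation $y$ and on the deepest word-symbols only through geometrically small corrections; quantitatively this yields $|\log g_n(x)-\log g(x)|\le C\theta^{n}$ for some $\theta\in(0,1)$ and, comparing two sequences agreeing in coordinates $-N,\dots,-1$, $|\psi(x)-\psi(x')|\le C\theta^{N}$, so that $\psi$ is $\alpha$-Hölder. I expect this distortion estimate, upgrading Bowen's bounded distortion to geometric decay via the spectral gap of the transfer operator, to be the main obstacle.

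Finally I would assemble the Gibbs inequality for $\psi$. The geometric convergence makes the errors in the telescoping product summable: since $g_{n+i}(\sigma^{i}x)$ differs from $g(\sigma^{i}x)$ by a factor $1+O(\theta^{\,n+i})$ and $\sum_{l\ge 1}\theta^{l}<\infty$, the product $\prod_{j=1}^{n} g_j(\sigma^{\,j-n}x)$ is comparable, uniformly in $x$ and $n$, to $\prod_{i} g(\sigma^{i}x)=\exp\big(\sum_i \psi(\sigma^{i}x)\big)$. As the summation runs over $n$ consecutive shifts and $\psi$ is bounded, adjusting the range by the two boundary terms changes the estimate only by a bounded factor, so
\[
\mu\big([x_{-n},\dots,x_{-1}]_{-n}^{-1}\big)\ \asymp\ \exp\Big(\sum_{k=-n}^{-1}\psi\big(\sigma^{k}x\big)\Big),
\]
which is precisely Bowen's Gibbs inequality for the Hölder potential $\psi$. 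Summing over admissible words of length $n$ and using $\sum_{|w|=n}\mu([w])=1$ forces $P(\psi)=0$, consistent with $g$ being a genuine conditional probability. Since these cylinder estimates, together with shift invariance, determine the measure and coincide with those of the unique Gibbs measure of $\psi$, the measure $\mu$ is the invariant Gibbs measure for $\psi$, completing the proof.
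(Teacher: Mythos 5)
The paper does not actually prove this lemma: it is dispatched by citation to Keane, Ledrappier and \cite[Theorem 2.1]{Walters}, where the fact that the backward conditional probabilities of a H\"older Gibbs measure form a H\"older $g$-function whose logarithm is a (normalised, zero-pressure) potential for the same two-sided measure is established. Your proposal is, in substance, a sketch of that standard proof rather than a different argument: the telescoping identity $\mu([x_{-n},\dots,x_{-1}]_{-n}^{-1})=\prod_{j=1}^n g_j(\sigma^{j-n}x)$ is correct, the crude bound $g_n\ge (C_1/C_2)e^{\inf\phi-P(\phi)}$ from (\ref{gibbsbowen}) is correct (the claim that $g$ is also bounded away from $1$ is neither needed nor true for a general subshift of finite type, where a symbol may have a unique admissible successor), and the final assembly --- summable errors in the telescoping product giving Bowen's inequality for $\psi$ with $P(\psi)=0$, then uniqueness of the invariant Gibbs measure --- is sound.

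The one genuine gap is exactly where you locate it. The estimate $|\log g_n(x)-\log g(x)|\le C\theta^n$, together with the H\"older continuity of $g$, is asserted but not proved, and the justification you offer does not work as stated: in the representation $\mu([u])=\int e^{S_{|u|}\tilde\phi(uy)}\,d\mu(y)$ the integrand depends on the continuation $y$ at order one through the final terms $\tilde\phi(\sigma^k(uy))$ with $k$ close to $|u|$, so it is not ``nearly constant up to geometrically small corrections''. Summable (even geometric) variations of $\tilde\phi$ alone only give that the cross-ratio $g_{n+1}(x)/g_n(x)$ is bounded above and below, not that it is $1+O(\theta^n)$; the geometric convergence genuinely requires the spectral gap of the normalised transfer operator, i.e.\ $\|L_{\tilde\phi}^n f-\int f\,d\mu\|_\infty\le C\theta^n\|f\|_{\alpha}$ applied to the relevant conditional expectations. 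You correctly name the spectral gap as the missing ingredient and flag this as the main obstacle, but as written the crux of the lemma is still an assertion; to make the argument self-contained you would need to carry out that contraction estimate (or simply cite Walters, as the paper does).
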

\begin{proof}
This is a standard result in the general theory of $g$-measures, beginning with Keane in the 70s \cite{Keane}.  The fact that $\psi$ is a potential for the two-sided Gibbs measure $\mu$ is due to Ledrappier \cite{Ledrappier}, see also \cite[Theorem 2.1]{Walters}.
\end{proof}

Theorem \ref{main} and Lemma \ref{reverseggibbs} combine to yield the following result for Gibbs measures.

\begin{cor} \label{maingibbs}
Let $\mu \in \mathcal{P}_\sigma(\Sigma^+)$ be an ergodic Gibbs measure for a H\"older continuous potential $\phi$ defined on a subshift of finite type $\Sigma^+_A$.  Then $\mu$ is uniformly scaling generating a distribution $Q \in \mathcal{D}(\Sigma^+)$.  Moreover, there exists a H\"older potential $\psi : \Sigma_A \to \mathbb{R}$ for the associated two-sided Gibbs measure from Lemma \ref{measures} (which is supported on the corresponding two-sided subshift of finite type $\Sigma_A$) such that
$$
Q \left(\mathcal{U} ( [e_0 \dots e_{m-1} ]_{0}^{m-1}, a, b) \right) \ = \  \mu \left(
\left\{
y \in \Sigma \hbox{ : } a < \exp\left(\sum_{k=1}^{m} \psi \big(\sigma^k(y^-e)\big)\right) < b
\right\}
\right)
$$
for any cylinder $[e_0 \cdots e_{m-1}]_0^{m-1}$ and $a < b$ and where we write $ y^-e = ( \cdots, y_{-2}, y_{-1},  e_0 \cdots e_{m-1}, \cdots )$ observing that since $\psi$ depends only on past coordinates it does not matter how we complete the sequence to the right.
\end{cor}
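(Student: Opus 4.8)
The plan is to deduce the corollary directly from Theorem \ref{main} and Lemma \ref{reverseggibbs}, since essentially all of the substantive work has already been done. First I would record that the hypotheses force $\mu$ to be an ergodic element of $\mathcal{P}_\sigma(\Sigma^+)$: a H\"older potential on a (topologically mixing) subshift of finite type admits a unique invariant Gibbs measure, and this measure is ergodic, as recalled in the discussion preceding equation (\ref{gibbsbowen}). Theorem \ref{main} therefore applies verbatim, so $\mu$ is uniformly scaling, generates some $Q \in \mathcal{D}(\Sigma^+)$, and
\[
Q\left(\mathcal{U}([e_0 \dots e_{m-1}]_0^{m-1}, a, b)\right) = \mu\left(\left\{ y \in \Sigma : a < \prod_{k=1}^m g\big(\sigma^k(y^-e)\big) < b \right\}\right),
\]
where $g$ is the reverse Jacobian supplied by Lemma \ref{reverseg}.

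Next I would invoke Lemma \ref{reverseggibbs}, which asserts that $\psi := \log g$ is a H\"older continuous potential for the associated two-sided Gibbs measure on $\Sigma_A$ obtained from Lemma \ref{measures}. This is the one place where the Gibbs hypothesis is genuinely consumed: for a general ergodic measure the reverse Jacobian $g$ is only defined $\mu$-almost everywhere and carries no regularity, whereas the Ledrappier-type result cited in Lemma \ref{reverseggibbs} promotes $\log g$ to an honest H\"older potential. In particular $\psi$ is defined everywhere on $\Sigma_A$, depends only on past coordinates (so that $\psi(\sigma^k(y^-e))$ is well defined independently of the rightward completion of $y^-e$), and its exponential gives a continuous representative of $g$.

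Finally I would perform the substitution $g = \exp(\psi)$ inside the product, so that taking logarithms converts the product over $k$ into the finite sum appearing in the statement:
\[
\prod_{k=1}^m g\big(\sigma^k(y^-e)\big) = \exp\left(\sum_{k=1}^m \psi\big(\sigma^k(y^-e)\big)\right).
\]
The only point requiring a moment's care is that Theorem \ref{main} is phrased with the merely $\mu$-almost-everywhere defined $g$, while the corollary is phrased with the continuous potential $\psi$; since $e^{\psi}$ is a continuous version of $g$, the two agree on $\text{spt}(\mu)$, and consequently the set $\{ y : a < \prod_k g(\sigma^k(y^-e)) < b \}$ and the set $\{ y : a < \exp(\sum_k \psi(\sigma^k(y^-e))) < b \}$ differ by at most a $\mu$-null set and have equal $\mu$-measure. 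Inserting this identity into the formula from Theorem \ref{main} produces exactly the claimed expression.

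I do not expect any genuine obstacle at the level of the corollary: its entire mathematical weight is carried by Lemma \ref{reverseggibbs}, which is cited rather than reproved, and the remaining manipulations are formal. If anything merits attention it is simply the bookkeeping in the previous paragraph ensuring that replacing the $\mu$-a.e.\ object $g$ by its H\"older representative $e^\psi$ does not alter the $\mu$-measure of the relevant level set; this is immediate from continuity of $\psi$ together with the almost-everywhere identity $\psi = \log g$.
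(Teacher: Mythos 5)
Your proposal matches the paper's argument exactly: the corollary is obtained by combining Theorem \ref{main} with Lemma \ref{reverseggibbs} and substituting $g = e^{\psi}$, which is all the paper does (it offers no separate proof beyond the sentence introducing the corollary). Your additional remark about replacing the $\mu$-a.e.\ defined $g$ by its continuous representative $e^{\psi}$ without changing the $\mu$-measure of the level set is a harmless and correct refinement of the same route.
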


The following proposition shows that if an ergodic Gibbs measure is fully supported, then the support of the distribution $Q$ is very homogeneous in that all measures in the support are uniformly equivalent. A similar observation which also considers Gibbs measures without full support was made in \cite{FraserPollicott}, where the uniform equivalence was needed to pursue geometric applications.

\begin{prop} \label{gibbsmeasuresequiv}
Let $\mu \in \mathcal{P}(\Sigma^+)$ be a fully supported Gibbs measure for a H\"older continuous potential $\phi$. Then there exists a uniform constant $C \geq 1$ depending only on the potential such that for all measurable $A  \subseteq \Sigma^+$ and all mini- or micromeasures $\nu$ we have
\[
C^{-1} \, \mu(A) \ \leq \  \nu(A) \ \leq \ C \, \mu(A).
\]
\end{prop}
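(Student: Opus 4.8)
The plan is to first reduce the claim to cylinder sets and then bootstrap to arbitrary Borel sets. Since the cylinders $[e_0\cdots e_{m-1}]_0^{m-1}$ are clopen and form a countable basis generating the Borel $\sigma$-algebra of $\Sigma^+$, it suffices to establish
\[
C^{-1}\,\mu\big([e_0\cdots e_{m-1}]_0^{m-1}\big) \ \le\ \nu\big([e_0\cdots e_{m-1}]_0^{m-1}\big) \ \le\ C\,\mu\big([e_0\cdots e_{m-1}]_0^{m-1}\big)
\]
for every cylinder and every mini- or micromeasure $\nu$, with a single constant $C\ge 1$. Granting this, I would pass to a general open set $U$ by writing it as a countable disjoint union of cylinders and using countable additivity, and then to an arbitrary measurable $A$ by outer regularity: both $\mu$ and $\nu$ are Borel probability measures on the compact metric space $\Sigma^+$ and hence outer regular, so choosing open $U\supseteq A$ with $\mu(U)$ close to $\mu(A)$ gives the upper bound $\nu(A)\le\nu(U)\le C\mu(U)$, and choosing open $U\supseteq A$ with $\nu(U)$ close to $\nu(A)$ gives $\mu(A)\le\mu(U)\le C\nu(U)$, which rearranges to the lower bound.

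The heart of the argument is the cylinder estimate for a minimeasure $\mu_{x,n}$, and here the full support hypothesis is what guarantees that all cylinders, and all concatenations appearing below, have positive measure, so that the minimeasures are well defined. By the definition of the minimeasure, together with (\ref{rewritekey}), I would write
\[
\frac{\mu_{x,n}\big([e_0\cdots e_{m-1}]_0^{m-1}\big)}{\mu\big([e_0\cdots e_{m-1}]_0^{m-1}\big)} \ = \ \frac{\mu\big([x_0\cdots x_{n-1}e_0\cdots e_{m-1}]_0^{n+m-1}\big)}{\mu\big([x_0\cdots x_{n-1}]_0^{n-1}\big)\,\mu\big([e_0\cdots e_{m-1}]_0^{m-1}\big)},
\]
so that the claim for cylinders is exactly the quasi-Bernoulli property of $\mu$. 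The key trick is to evaluate the three Gibbs estimates (\ref{gibbsbowen}) at a single compatible point: fix any completion $w=(x_0,\dots,x_{n-1},e_0,\dots,e_{m-1},z_0,z_1,\dots)$, which lies in the first two cylinders, and note that $u:=\sigma^n w$ lies in $[e_0\cdots e_{m-1}]_0^{m-1}$. Applying (\ref{gibbsbowen}) to the length-$(n+m)$ word at $w$, the length-$n$ word at $w$, and the length-$m$ word at $u$, the Birkhoff sums split as $\sum_{k=0}^{n+m-1}\phi(\sigma^k w)=\sum_{k=0}^{n-1}\phi(\sigma^k w)+\sum_{k=0}^{m-1}\phi(\sigma^k u)$ and the pressure terms satisfy $(n+m)P(\phi)=nP(\phi)+mP(\phi)$, so the exponential factors cancel exactly. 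What survives is only the comparison of the Gibbs constants, giving
\[
\frac{C_1}{C_2^2} \ \le \ \frac{\mu_{x,n}\big([e_0\cdots e_{m-1}]_0^{m-1}\big)}{\mu\big([e_0\cdots e_{m-1}]_0^{m-1}\big)} \ \le \ \frac{C_2}{C_1^2}
\]
uniformly in $x$, $n$ and $e$. Taking $C=\max\{1,\,C_2/C_1^2,\,C_2^2/C_1\}$, which depends only on the potential through the Gibbs constants $C_1,C_2$, settles the cylinder estimate for all minimeasures.

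For micromeasures I would pass to the weak-$*$ limit and exploit precisely that cylinders are clopen. If $\nu=\lim_j \mu_{x,n_j}$, then for a cylinder $E=[e_0\cdots e_{m-1}]_0^{m-1}$, viewing $E$ as an open set the portmanteau theorem gives $\nu(E)\le\liminf_j\mu_{x,n_j}(E)\le C\mu(E)$, while viewing $E$ as a closed set it gives $\nu(E)\ge\limsup_j\mu_{x,n_j}(E)\ge C^{-1}\mu(E)$; thus the two-sided cylinder estimate persists for micromeasures, and the same bootstrap to Borel sets applies. The only genuinely load-bearing step is the alignment in the Gibbs inequality, together with the observation that full support is what keeps every relevant cylinder of positive measure so that the ratios above make sense; the extension to micromeasures and to arbitrary measurable sets is then routine, relying on clopenness and regularity rather than on any further structure of $\mu$.
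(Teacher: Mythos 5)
Your proof is correct, but the key estimate is carried out differently from the paper's. The paper evaluates the three Gibbs bounds from (\ref{gibbsbowen}) at the ``natural'' points: the concatenation $(x_0,\dots,x_{n-1},y)$ for the long cylinder, but $x$ itself and $y$ itself for the two short ones. The Birkhoff sums then do not cancel exactly --- the first $n$ terms are evaluated at $(x_0,\dots,x_{n-1},y)$ versus at $x$ --- and the discrepancy is controlled by the summable variations $V(\phi)=\sum_k \mathrm{var}_k(\phi)$ of the H\"older potential, yielding the constant $(C_2/C_1^2)\exp(V(\phi))$. You instead exploit the fact that (\ref{gibbsbowen}) holds at \emph{every} point of the relevant cylinder and evaluate all three bounds along a single compatible point $w$ and its shift $u=\sigma^n w$, so the Birkhoff sums telescope exactly and you get the cleaner constant $\max\{C_2/C_1^2,\,C_2^2/C_1\}$ with no distortion estimate at all. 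This is a genuine simplification: it uses nothing about $\phi$ beyond the Gibbs inequality itself (H\"older continuity never enters), whereas the paper's route pays an extra factor $e^{V(\phi)}$ for the freedom to compare at prescribed points. You are also more explicit than the paper about the two routine reductions it dispatches in a sentence each: the passage to micromeasures via the portmanteau theorem applied to cylinders qua clopen sets, and the bootstrap from cylinders to arbitrary Borel sets via disjoint cylinder decompositions of open sets and outer regularity. Both arguments are sound; yours is the more economical at the core step.
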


\begin{proof}
It suffices to prove the result for minimeasures because the bounds are clearly preserved under weak convergence to any micromeasure.   Let $\nu = \mu_{x,n}$ be a minimeasure of $\mu$ at $x \in \Sigma^{+}$ at depth $n \in \mathbb{N}$.  It suffices to estimate the measure only for cylinders, so let $y \in \Sigma^+$ and $m \in \mathbb{N}$ define an arbitrary cylinder $[y_0, \dots, y_{m-1}]_0^{m-1} \subseteq \Sigma^+$.  Let the $k$th variation of the potential $\phi$ be defined by 
\[
\text{var}_k(\phi) \ = \ \sup_{x,y \in \Sigma^+} \Big\{ \lvert \phi(x)  - \phi(y) \rvert : x_0 = y_0, \dots,  x_{k-1} = y_{k-1}  \Big\}.
\]
A simple consequence of $\phi$ being H\"older is that it has \emph{summable variations}, i.e.
\[
V(\phi) : = \sum_{k=0}^{\infty} \text{var}_{k}(\phi) < \infty.
\]
We have
\begin{eqnarray*}
&\,& \hspace{-10mm} \frac{\nu\big([y_0, \dots, y_{m-1}]_0^{m-1} \big)}{\mu\big([y_0, \dots , y_{m-1}]_0^{m-1} \big)}\\ \\
& = &  \frac{\mu\big([x_0 , \dots , x_{n-1} , y_0 , \dots  , y_{m-1}]_0^{m+n-1} \big)}{\mu\big([x_0 ,\dots, x_{n-1}]_0^{n-1} \big)\mu\big([y_0, \dots, y_{m-1}]_0^{m-1} \big)} \\ \\
& \leq & \frac{C_2 \, \exp \Big(\sum_{k=0}^{m+n-1}\phi\big(\sigma^k(x_0, \dots , x_{n-1} \, y)  \big) \, - \, (m+n)P(\phi) \Big)}{C_1 \, \exp \Big( \sum_{k=0}^{m-1}\phi(\sigma^k(y)) \, - \, mP(\phi) \Big) \ C_1 \, \exp \Big( \sum_{k=0}^{n-1}\phi(\sigma^k(x))  \, - \, nP(\phi) \Big)} \\ \\
& = & \frac{C_2}{C_1^2} \ \exp \Bigg(\sum_{k=0}^{n-1}\phi\big(\sigma^k(x_0, \dots, x_{n-1} \,  y)  \big) - \sum_{k=0}^{n-1}\phi(\sigma^k(x))  \Bigg) \\ \\
& \leq & \frac{C_2}{C_1^2} \ \exp \Bigg(\sum_{k=0}^{n-1} \text{var}_{n-k}(\phi)\Bigg) \  \leq  \   \frac{C_2}{C_1^2} \, \exp\big(V(\phi) \big) \ < \  \infty.
\end{eqnarray*}
A similar argument going in the opposite direction yields
\[
\frac{\nu\big([\alpha \lvert_n]\big)}{\mu\big([\alpha \lvert_n]\big)} \  \geq  \  \frac{C_1}{C_2^2} \, \exp\big(-V(\phi) \big) \ > \ 0
\]
completing the proof.  The lower bound required $\mu$ to be defined for the full shift, but the upper bound holds more generally.
\end{proof}

Proposition \ref{gibbsmeasuresequiv} shows that all micromeasures of a fully supported Gibbs measure are themselves Gibbs measures for the same potential and a pleasant consequence of this is that there is at most one \emph{invariant} micromeasure for any (invariant or non-invariant) Gibbs measure.
\\ \\
In the simpler setting of locally constant potentials, one can say even more.  In fact, an explicit expression for the generated distribution $Q$ can be derived easily from the definitions.
\begin{example}
Let $\mu \in \mathcal{P}_\sigma(\Sigma^+)$ be a Bernoulli measure.  Then all minimeasures and micromeasures at any point are equal to $\mu$ itself and so $\mu$ is uniformly scaling and generates the distribution $\delta_\mu \in \mathcal{D}(\Sigma^+)$.
\end{example}
\begin{proof}
This follows immediately from the definitions.
\end{proof}
The situation for Markov measures is only slightly more complicated.  Here there are $k$ different measures one can find in the scaling scenery, corresponding to the first level blow ups. For $i \in \mathcal{I}$, let $\mu_i \in \mathcal{P}(\Sigma^+)$ be defined by 
\[
\mu_i(A) =  \frac{ \mu\left(i A \right) }{\mu\left([i]_0^0 \right)}
\]
for a measurable set $A \subseteq \Sigma^+$.

\begin{example} \label{markovexample}
Let $\mu \in \mathcal{P}_\sigma(\Sigma^+)$ be an ergodic Markov measure.  Then $\mu$ is uniformly scaling and generates the distribution
\[
\sum_{i \in \mathcal{I}} \, \pi_i \, \delta_{\mu_i} \in \mathcal{D}(\Sigma^+).
\]
\end{example}
\begin{proof}
Let $i \in \mathcal{I}$, $x \in \Sigma^+$ such that $x_{n-1} = i$ and let $y \in \Sigma^+$ and $m \in \mathbb{N}$ be arbitrary.  Then
\begin{align*}
\mu_{x,n}([y_0, \dots, y_{m-1}]_0^{m-1})  \ &= \  \frac{\mu\big( [x_0, \dots x_{n-1}, y_0, \dots, y_{m-1}]_0^{m+n-1}\big) }{\mu\big( [x_0, \dots, x_{n-1} ]_0^{n-1} \big)} \\ 
&=  \ \frac{\pi_{x_0} \, p_{x_0 x_1} \cdots  p_{x_{n-2} x_{n-1}}  p_{x_{n-1} y_{0}}  \cdots p_{y_{m-2} y_{m-1}} }{\pi_{x_0} \, p_{x_0 x_1} \cdots  p_{x_{n-2} x_{n-1}}} \\ 
 &=  \  p_{x_{n-1} y_{0}}  \cdots p_{y_{m-2} y_{m-1}} \\
&= \  \mu_i([y_0, \dots, y_{m-1}]_0^{m-1}) 
\end{align*}
and so for such $x$ and $n$, $\mu_{x,n} = \mu_i$.  This observation combined with the Birkhoff ergodic 
theorem implies that for $\mu$ almost all $x$ we have
\begin{align*}
\frac{1}{N} \sum_{n=0}^{N-1} \delta_{\mu_{x,n}}   \   & =  \  \sum_{i \in \mathcal{I}} \,  \bigg( \frac{1}{N} \, \sum_{n=0}^{N-1} \textbf{1}_{[i]_0^0}\big(\sigma^n(x) \big)\bigg) \, \delta_{\mu_i} \\ 
&\to_{w^*}  \ \sum_{i \in \mathcal{I}} \,  \bigg( \int_{\Sigma^+} \textbf{1}_{[i]_0^0} \, \text{d} \mu \bigg) \, \delta_{\mu_i} \\
&=  \ \sum_{i \in \mathcal{I}} \, \pi_i \, \delta_{\mu_i}
\end{align*}
completing the proof.
\end{proof}

These results can easily be extended to ``generalised Markov measures'', i.e., the Gibbs measures for locally constant functions.  The simplicity of the result in the case of Markov measures allows us to make more precise statements about the statistical behaviour of the scenery distributions.  For example, we have the following Central Limit Theorem (CLT).

\begin{cor}[Central Limit Theorem]
Let $\mu$ be an ergodic Markov measure and $Q = \sum_{i \in \mathcal{I}} \, \pi_i \, \delta_{\mu_i}$ be the distribution it generates.  Fix a cylinder $[e_0 \cdots e_{m-1}]_0^{m-1}$ and $a < b$ and write $\mathcal{U} = \mathcal{U} ( [e_0 \dots e_{m-1} ]_{0}^{m-1}, a, b)$.  If  $Q(\mathcal{U}) \in (0,1)$, then letting $\sigma^2 = Q(\mathcal{U}) - Q(\mathcal{U})^2>0$ we have
\[
\frac{1}{\sqrt{N}} \, \sum_{n=0}^{N-1} \delta_{\mu_{x,n}} (\mathcal{U})    \ - \  \sqrt{N }\, Q (\mathcal{U})  \ \Rightarrow \  \mathcal{N}(0,\sigma^2)
\]
where $\Rightarrow$ denotes convergence in distribution.
\end{cor}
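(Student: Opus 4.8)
The plan is to reduce the statement to the classical central limit theorem for a Birkhoff sum of a locally constant observable. By Example \ref{markovexample}, for a Markov measure the minimeasure $\mu_{x,n}$ at depth $n \geq 1$ depends only on the single coordinate $x_{n-1}$, namely $\mu_{x,n} = \mu_{x_{n-1}}$, while $\mu_{x,0} = \mu$. Hence each indicator $\delta_{\mu_{x,n}}(\mathcal{U})$ takes only the values $0$ and $1$, and for $n \geq 1$ it equals $\textbf{1}_S(x_{n-1})$, where $S = \{ i \in \mathcal{I} : \mu_i \in \mathcal{U}\} = \{ i \in \mathcal{I} : p_{i e_0} p_{e_0 e_1} \cdots p_{e_{m-2} e_{m-1}} \in (a,b)\}$. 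Writing $F : \Sigma^+ \to \{0,1\}$ for the locally constant (hence trivially H\"older) function $F(y) = \textbf{1}_S(y_0)$, I would record that
\[
\sum_{n=0}^{N-1} \delta_{\mu_{x,n}}(\mathcal{U}) \ = \ \textbf{1}_{\mathcal{U}}(\mu) \ + \ \sum_{j=0}^{N-2} F(\sigma^j x),
\]
so the quantity of interest is, up to the single bounded boundary term $\textbf{1}_{\mathcal{U}}(\mu)$, an ergodic Birkhoff sum of $F$. The centring is then correct, since $\int_{\Sigma^+} F \, \text{d}\mu = \sum_{i \in S} \pi_i = \sum_{i \in \mathcal{I}} \pi_i \, \delta_{\mu_i}(\mathcal{U}) = Q(\mathcal{U})$.

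Next I would invoke the central limit theorem for Birkhoff sums of a H\"older observable over a topologically mixing subshift of finite type carrying its ergodic Markov (Gibbs) measure $\mu$; equivalently, this is the classical CLT for additive functionals of a finite-state ergodic Markov chain. Because $Q(\mathcal{U}) \in (0,1)$ forces $S$ to be a non-empty proper subset of the states charged by $\mu$, the function $F$ is non-constant $\mu$-a.e., which is exactly what underlies the positivity of the limiting variance. This yields
\[
\frac{1}{\sqrt{N}} \left( \sum_{j=0}^{N-2} F(\sigma^j x) - N \, Q(\mathcal{U}) \right) \ \Rightarrow \ \mathcal{N}(0, \sigma_*^2)
\]
for the appropriate asymptotic variance $\sigma_*^2$, and the bounded boundary term together with the off-by-one discrepancy between $N-1$ and $N$ summands contributes $O(N^{-1/2})$, so it does not affect the limit by Slutsky's theorem.

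The step I expect to be the main obstacle is the identification of the variance. The CLT above produces the Green--Kubo (asymptotic) variance
\[
\sigma_*^2 \ = \ \mathrm{Var}_\mu(F) \ + \ 2 \sum_{k=1}^{\infty} \mathrm{Cov}_\mu\big(F, F \circ \sigma^k\big),
\]
and one must check that this equals the claimed value. Since $F$ is $\{0,1\}$-valued we certainly have $\mathrm{Var}_\mu(F) = \int F^2 \, \text{d}\mu - \left(\int F \, \text{d}\mu\right)^2 = Q(\mathcal{U}) - Q(\mathcal{U})^2$, so the stated $\sigma^2$ is precisely the one-step marginal variance. The clean formula therefore holds exactly when the cross-covariances
\[
\mathrm{Cov}_\mu\big(F, F\circ\sigma^k\big) \ = \ \sum_{i \in S} \sum_{j \in S} \pi_i (P^k)_{ij} \ - \ Q(\mathcal{U})^2
\]
sum to zero. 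This is automatic when successive minimeasure indicators are genuinely independent (as in the Bernoulli case), but for a measure with non-trivial transition matrix these correlation terms are generically non-zero; the honest conclusion then carries the full Green--Kubo variance $\sigma_*^2$, which collapses to $Q(\mathcal{U})(1-Q(\mathcal{U}))$ only under independence of the increments. Pinning down which cancellation is intended, and hence whether to state $\sigma^2$ or $\sigma_*^2$, is the crux of the argument; everything else is the standard Markov-chain CLT.
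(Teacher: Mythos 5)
Your reduction is exactly the paper's: via Example \ref{markovexample} one writes $\delta_{\mu_{x,n}}(\mathcal{U})$ as the indicator that the relevant coordinate of $x$ lies in $\mathcal{I}_{\mathcal{U}} = \{i \in \mathcal{I} : \mu_i \in \mathcal{U}\}$, and then applies a central limit theorem to this stationary $\{0,1\}$-valued sequence; the boundary term at $n=0$ and the off-by-one between $x_{n-1}$ and $x_n$ that you track are harmless, as you say. Where the two arguments part ways is precisely at the point you flag as the crux, and there your caution is warranted: the paper's proof simply asserts that $X_n = \sum_{i\in\mathcal{I}_{\mathcal{U}}}\textbf{1}_{[i]_0^0}(\sigma^n x)$ is an i.i.d.\ sequence and invokes the classical CLT, which is what produces the one-step marginal variance $Q(\mathcal{U})-Q(\mathcal{U})^2$. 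For a Markov measure whose transition matrix is not of Bernoulli type (i.e.\ $p_{ij} \neq \pi_j$) the coordinates are not independent, the covariances
\[
\mathrm{Cov}_\mu\big(F, F\circ\sigma^k\big) \ = \ \sum_{i \in \mathcal{I}_{\mathcal{U}}} \sum_{j \in \mathcal{I}_{\mathcal{U}}} \pi_i (P^k)_{ij} \ - \ Q(\mathcal{U})^2
\]
do not vanish in general, and the correct limiting variance is your Green--Kubo expression $\sigma_*^2$.

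So your proposal is not merely an alternative route: it identifies a genuine defect in the paper's own proof. The conclusion as printed, with $\sigma^2 = Q(\mathcal{U})-Q(\mathcal{U})^2$, holds only when the increments are independent (essentially the Bernoulli case); in general the corollary should be stated with the asymptotic variance $\sigma_*^2$, for which the Markov-chain CLT (equivalently, the CLT for H\"older observables over a mixing subshift of finite type) applies verbatim. One small refinement to your write-up: non-constancy of $F$ alone does not guarantee $\sigma_*^2>0$ for a dependent sequence, since $\sigma_*^2=0$ exactly when $F - \int F\,\mathrm{d}\mu$ is an $L^2$ coboundary; so under the corrected statement the hypothesis $Q(\mathcal{U})\in(0,1)$ no longer automatically yields a non-degenerate limit and a separate (standard) non-degeneracy check is needed. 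Everything else in your argument is sound and matches the paper's strategy.
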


\begin{proof}
Write $\mathcal{I}_{\mathcal{U}} = \{ i \in \mathcal{I} : \mu_i \in \mathcal{U}\}$.  Example \ref{markovexample} and the classical CLT yield
\begin{eqnarray*}
\frac{1}{\sqrt{N}} \, \sum_{n=0}^{N-1} \delta_{\mu_{x,n}} (\mathcal{U})    \ - \  \sqrt{N} \, Q (\mathcal{U}) &=&  \frac{1}{\sqrt{N}} \, \sum_{n=0}^{N-1} \left( \sum_{i \in \mathcal{I}_{\mathcal{U}} } \textbf{1}_{[i]_0^0}\big(\sigma^n(x) \big) \right) \ - \ \sqrt{N} \, Q (\mathcal{U}) \\ \\
 &\Rightarrow&  \mathcal{N}(0,\sigma^2) 
\end{eqnarray*}
which completes the proof.  We have used the fact that $X_n : = \sum_{i \in \mathcal{I}_{\mathcal{U}} } \textbf{1}_{[i]_0^0}\big(\sigma^n(x) \big) $ is an i.i.d. sequence taking the value 1 with probability $Q(\mathcal{U}) = \sum_{i \in \mathcal{I}_{\mathcal{U}}} \pi_i$ and 0 otherwise.
\end{proof}

Other related statistical results follow similarly, but we omit further details. If one was interested in obtaining a CLT for general Gibbs measures then, inspecting the proof of Theorem \ref{main}, one obtains
\[
\frac{1}{\sqrt{N}} \, \sum_{n=0}^{N-1} \delta_{\mu_{x,n}} (\mathcal{U})    \ - \  \sqrt{N }\, Q(\mathcal{U}) \ = \ 
\frac{1}{\sqrt{N}} \,\sum_{n=0}^{N-1}  \textbf{1}_{(g_n^e)^{-1} (a,b) }\big(\sigma^n(x) \big)\ - \  \sqrt{N }\, Q(\mathcal{U}) .
\]
This expression is more difficult to handle for two reasons.  The first is that it involves an ergodic sum for a sequence of functions and so one needs an analogue of the CLT for Maker's ergodic theorem.  The second and more important reason is that the sequence $X_n: = \textbf{1}_{(g_n^e)^{-1} (a,b) }\big(\sigma^n(x) \big)$ is not i.i.d. and moreover we cannot guarantee that the functions  $\textbf{1}_{(g_n^e)^{-1} (a,b) }$ or even $\textbf{1}_{(g^e)^{-1} (a,b) }$ are H\"older continuous, despite the fact we know (in the Gibbs setting) that $g_n^e$ and $g^e$ are H\"older.  This prevents us from using several standard results on CLTs for ergodic sums, see \cite{ParryPollicott, ParryCoelho}.  In the setting of ergodic non-Gibbs $\mu$, a CLT appears even harder to achieve because we can only guarantee $g^e$ is $L^1$.

\vspace{8mm}

\begin{centering}

\textbf{Acknowledgements}

This work began while JMF was a PDRA of MP at the University of Warwick.  JMF and MP were financially supported in part by the EPSRC grant EP/J013560/1.

\end{centering}

\end{document}